\newcommand{\1}{\mathbb{I}}
\newcommand{\N}{\mathbb{N}}
\newcommand{\R}{\mathbb{R}}
\newcommand{\e}{\varepsilon}
\newcommand{\bm}[1]{\boldsymbol{#1}}
\newcommand{\Y}{\bm{Y}}
\def\C {\,|\:}
\def\C {\,|\:}
\def\b{\bm{\beta}}
\def\Y{\bm{Y}}
\def\x{\bm{x}}
\def\u{\bm{u}}
\def\b{\bm{\beta}}
\newtheorem{definition}{Definition}[section]
\newtheorem{lemma}{Lemma}[section]
\newtheorem{theorem}{Theorem}[section]
\newtheorem{remark}{Remark}[section]
 \theoremstyle{assumption}
 \newtheorem{assumption}{Assumption}
\title{Bayesian Dyadic Trees and Histograms for  Regression}
\author{
  St\'{e}phanie van der Pas  \\
  Mathematical Institute \\
  Leiden University\\
  Leiden, The Netherlands \\
  \texttt{svdpas@math.leidenuniv.nl} \\
  \And
    Veronika Ro\v{c}kov\'{a} \\
 Booth School of Business\\
  University of Chicago\\
  Chicago, IL, 60637 \\
  \texttt{Veronika.Rockova@ChicagoBooth.edu} \\
  }
\begin{document}

\maketitle

\begin{abstract} 

  {Many machine learning  tools for regression are based on recursive partitioning of the covariate space into smaller regions, where the regression function can be estimated locally. Among these, regression trees and their ensembles have demonstrated impressive empirical performance.  
 In this work,  we shed light on the machinery behind Bayesian variants of these methods.  In particular, we study Bayesian regression histograms, such as Bayesian dyadic trees, in the simple regression case with just one predictor.   We focus on the reconstruction of regression surfaces that are piecewise constant, where the number of jumps is unknown. We show that with suitably designed priors, posterior distributions concentrate around the true step regression function at a near-minimax rate. These results {\sl do not} require the knowledge of the true number of steps, nor the width of the true partitioning cells. Thus, Bayesian dyadic regression trees are fully adaptive and can recover the true piecewise regression function nearly as well as if we knew the exact number and location  of jumps.
 Our results constitute the first step towards  understanding why Bayesian trees and their ensembles have worked so well in practice.  As an aside, we discuss prior distributions  on balanced interval partitions and how they relate to an old  problem in geometric probability. Namely, we relate the probability of covering the circumference of a circle with random arcs whose endpoints are confined to a grid, a new variant of the original problem.
   }
  \end{abstract}
\vspace{-0.5cm}
\section{Introduction}
Histogram regression methods, such as regression trees \cite{cart} and their ensembles \cite{breiman}, have an impressive record of empirical success in many areas of application \cite{Berchuck2005, Nimeh2007, Razi2005, Green2012, Polley2010}. Tree-based machine learning (ML) methods build a piecewise constant reconstruction of  the regression surface based  on  ideas of recursive partitioning. Perhaps the most popular partitioning schemes are the ones based on parallel-axis splits. One recent example is the Mondrian process \cite{mondrian}, which was introduced to the ML community as a prior over tree data structures with interesting self-consistency properties. Many efficient algorithms exist that can be deployed to fit regression histograms underpinned by some partitioning scheme. Among these, Bayesian variants, such as Bayesian CART  \cite{Chipman1998,Denison1998} and BART \cite{Chipman2010}, have appealed to umpteen practitioners. There are several reasons why. Bayesian tree-based regression tools (a) can adapt to regression surfaces without any need for pruning, (b) are reluctant to  overfit, (c) provide an avenue for  uncertainty statements via posterior distributions. 
While practical success stories abound  \cite{Berchuck2005, Nimeh2007, Razi2005, Green2012, Polley2010}, the theoretical understanding of Bayesian  regression tree methods has  been lacking. In this work, we study the quality of posterior distributions with regard to the three properties mentioned above.
We provide first theoretical results that contribute to the understanding of Bayesian Gaussian regression methods based on recursive partitioning.

Our performance metric will be the speed of posterior concentration/contraction around the true regression function. This is ultimately a frequentist  assessment, describing the typical  behavior of the posterior under the true generative model \cite{Ghosal2000}.  Posterior concentration rate results are now slowly entering the machine learning community as a tool for obtaining more insights into Bayesian methods \cite{Zhang2004, Tang2014, Korda2013, Briol2015, Chen2016}. 
Such results quantify not only the typical distance between a point estimator (posterior mean/median) and the truth, but also the typical spread of the posterior around the truth.   Ideally, most of the posterior mass should be concentrated in a ball centered around the true value with a radius proportional to the minimax rate \cite{Ghosal2000, Ghosal2007}. 
Being inherently a performance measure of both location and spread, optimal posterior concentration provides a necessary certificate for further uncertainty quantification \cite{Szabo2015, Castillo2014, Rousseau2016b}.
Beyond uncertainty assessment, theoretical guarantees that describe  the average posterior shrinkage behavior  have also been a valuable instrument for assessing the suitability of  priors.  As such, these results can often provide useful guidelines for the choice of tuning parameters, e.g. the latent Dirichlet allocation model \cite{Tang2014}.

Despite the rapid growth of this  frequentist-Bayesian theory field, posterior concentration results for  Bayesian regression histograms/trees/forests have,  so far,  been unavailable.  Here, we adopt  this theoretical framework to get new insights into why these methods work so well.

\vspace{-0.3cm}
\subsection*{Related Work}
Bayesian density estimation with step functions is  a relatively well-studied problem \cite{Castillo_polya,Liu2015,Scricciolo2007}. The literature on Bayesian histogram regression is a bit less crowded. Perhaps the closest to our conceptual framework is the work by Coram and Lalley \cite{coram}, who studied Bayesian non-parametric binary regression with uniform mixture priors on step functions. The authors focused on $L_1$ consistency. Here, we focus on posterior concentration rather than consistency.  We are not aware of any other related theoretical study of Bayesian histogram methods for Gaussian regression.

\vspace{-0.3cm}
\subsection*{Our Contributions}

In this work we focus on a canonical regression setting with merely one predictor. We study hierarchical priors on step functions and provide conditions under which the posteriors concentrate optimally around the true regression function. We consider the case when the true regression function  itself is a step function, i.e. a tree or a tree ensemble, where the number and location of jumps is unknown.

We start with a very simple space of approximating step functions, supported on equally sized intervals where the number of splits is equipped with a prior. These partitions include dyadic regression trees. 
We show that for a suitable complexity prior, all relevant information about the true regression function (jump sizes and the number of jumps) is learned from the data automatically. During the course of the proof, we develop a notion of the complexity of a piecewise constant function relative to its approximating class.

Next, we take a  larger approximating space consisting of functions supported on balanced partitions that do not necessarily have to be  of equal size. These correspond to more general trees with splits at observed values. With a uniform prior over all balanced partitions, we are able to achieve a nearly ideal performance (as if we knew the number and the location of jumps). 
As an aside, we describe the distribution of interval lengths obtained when the splits are sampled uniformly from  a grid.
We relate this distribution to the probability of covering the circumference of a circle with random arcs, a problem in geometric probability that dates back to \cite{shepp,Feller68}. Our version of this problem assumes that  the splits are chosen from a discrete grid rather than from  a unit interval.

\vspace{-0.3cm}
\subsection*{Notation}
With $\propto$ and $\lesssim$ {we} will  denote an equality and inequality,  up to a constant.
The $\varepsilon$-covering number of a set $\Omega$ for a semimetric $d$, denoted by $N(\varepsilon,\Omega,d),$ is the minimal number of $d$-balls of radius $\varepsilon$ needed to cover the set $\Omega$. 
We denote by $\phi(\cdot)$ the standard normal density and by  $P_f^n= \bigotimes P_{f,i}$ the $n$-fold product measure of the $n$ independent observations under \eqref{eq:def_problem} with a regression function $f(\cdot)$. By $\mathbb{P}_n^{x}=\frac{1}{n}\sum_{i=1}^n\delta_{x_i}$ we denote the empirical distribution of the observed covariates, by $||\cdot||_n$ the norm on $L_2(\mathbb{P}_n^{x})$ and by $||\cdot||_2$ the standard Euclidean norm.

\vspace{-0.2cm}
\section{Bayesian Histogram Regression}
\vspace{-0.2cm}
We consider a classical nonparametric regression model, where response variables $\Y^{(n)} = (Y_1, \ldots, Y_n)'$ are related to input variables $\x^{(n)} = (x_1, \ldots, x_n)'$ through the function $f_0$ as follows
\vspace{-0.1cm}
\begin{equation}\label{eq:def_problem}
Y_i = f_0(x_i) + \varepsilon_i, \quad \varepsilon_i \sim \mathcal{N}(0, 1), \quad i = 1, \ldots, n.
\end{equation}	

We assume that the covariate values $x_i$ are one-dimensional, fixed and have been rescaled so that $x_i\in [0, 1]$. 
Partitioning-based regression methods are often invariant to monotone transformations of  observations. In particular, when $f_0$ is a step function,  standardizing the distance between the observations, and thereby the split points, has no effect on the nature of the estimation problem. Without loss of generality, we will thereby assume that the observations are aligned on an equispaced grid.
\begin{assumption}(Equispaced Grid) \label{ass:fixedgrid}
We assume that the {scaled} predictor values satisfy $x_i = \frac{i}{n}$ for  each $i = 1, \ldots, n$.
\end{assumption}
 This assumption implies that partitions that are balanced in terms of the Lebesque measure  will be balanced also in terms of the number of observations.
 A similar assumption was imposed by Donoho \cite{Donoho1997} in his study of Dyadic CART.

The underlying regression function $f_0: [0, 1] \to \mathbb{R}$ is assumed to be a step function, i.e.
 $$
 f_0(x) = \sum_{k=1}^{K_{0}} \beta_k^0 \1_{\Omega_k^0}(x),
 $$ 
 where $\{\Omega_k^0\}_{k=1}^{K_{0}}$ is a partition of  $[0, 1]$ into $K_0$ non-overlapping intervals. {We assume that $\{\Omega_k^0\}_{k=1}^{K_{0}}$ is minimal, meaning that $f_0$ cannot be represented with a smaller partition (with less than $K_0$ pieces).}
 Each partitioning cell $\Omega_k^0$ is associated with a step size $\beta_k^0$, determining the level of the function $f_0$ on $\Omega_k^0$. The entire vector of $K_0$ step sizes will be denoted by $\b^0=(\beta_1^0,\dots,\beta_K^0)'$.

One might like to think of $f_0$ as a regression tree with $K_0$ bottom leaves. Indeed, every step function can be associated with an equivalence class of trees that  live on the same partition but differ in their tree topology. The number of bottom leaves $K_0$ will be treated as unknown throughout this paper.
Our goal will be designing a suitable class of priors on step functions so that the posterior concentrates tightly around $f_0$. Our  analysis  with a single predictor has served as a precursor to a full-blown analysis for high-dimensional regression trees \citep{Rockova2017}.

We consider an approximating space of all step functions (with $K=1,2,\dots$ bottom leaves)
\begin{equation}\label{shells}
\mathcal{F}=\cup_{K=1}^\infty\mathcal{F}_K,
\end{equation}
which consists of smaller  spaces (or shells) of all $K$-step functions
$$
\mathcal{F}_K=\left\{f_{\b}:[0,1]\rightarrow\R;  f_{\b}(x)=\sum_{k=1}^{K} \beta_k \1_{\Omega_k}(x)\right\},
$$
each indexed by a partition $\{\Omega_k\}_{k=1}^K$ and a vector of $K$ step heights $\b$. The fundamental building block of our theoretical analysis will be the prior  on $\mathcal{F}$. This prior distribution has three main ingredients, described in detail below, (a) a prior on the number of steps $K$, (b) a prior on the partitions $\{\Omega_k\}_{k=1}^K$ of size $K$, and (c) a prior on step sizes $\b=(\beta_1,\dots,\beta_K)'$.

 \vspace{-0.1cm}
 \subsection{Prior  $\pi_K(\cdot)$ on the Number of Steps $K$ }
To avoid overfitting, we assign an exponentially decaying prior distribution that penalizes partitions with too many jumps.
\begin{definition}(Prior on $K$)\label{prior_K}
The prior on the number of partitioning cells $K$ satisfies
\begin{equation}\label{priorK}
\pi_K(k)\equiv\Pi(K=k)\,\propto\, \exp(-c_K\, k\log k)\quad\text{for}\quad k=1,2,\dots.
\end{equation}
\end{definition}
 This prior is no stranger to non-parametric problems. It was deployed for stepwise  reconstructions of densities \cite{Scricciolo2007,Liu2015}  and regression surfaces \cite{coram}. When $c_K$ is large, this prior is concentrated on models with small complexity where overfitting should not occur. Decreasing $c_K$ leads to the smearing  of the prior mass over partitions with more jumps. This is illustrated in Figure \ref{fig:prior}, which depicts the prior for various choices of $c_K$. 
 We provide recommendations for the choice of $c_K$ in {Section 3.1.}

\vspace{-0.3cm}
\begin{figure}[h]
  \centering
  \includegraphics[width = 0.45\textwidth]{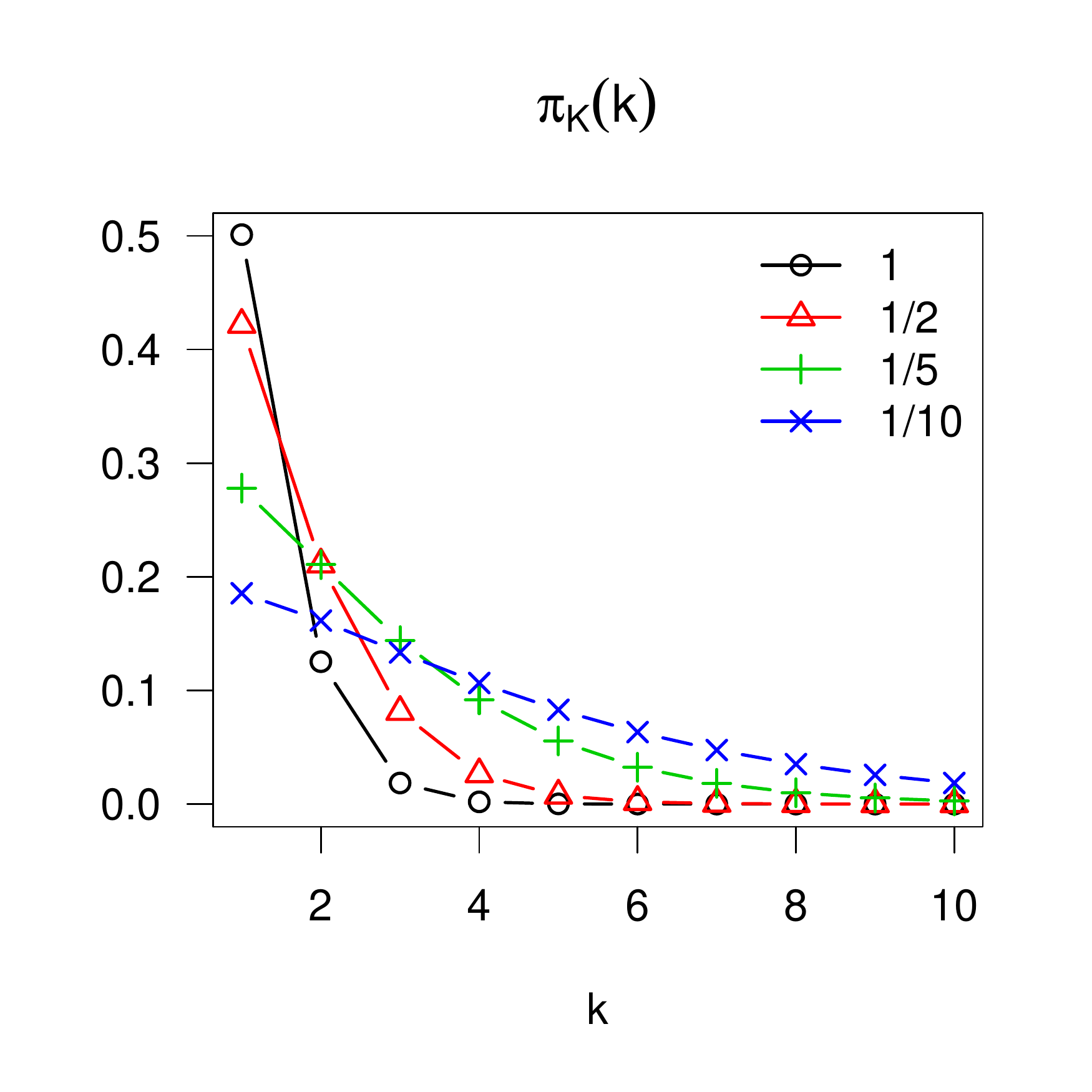}
   \includegraphics[width = 0.45\textwidth]{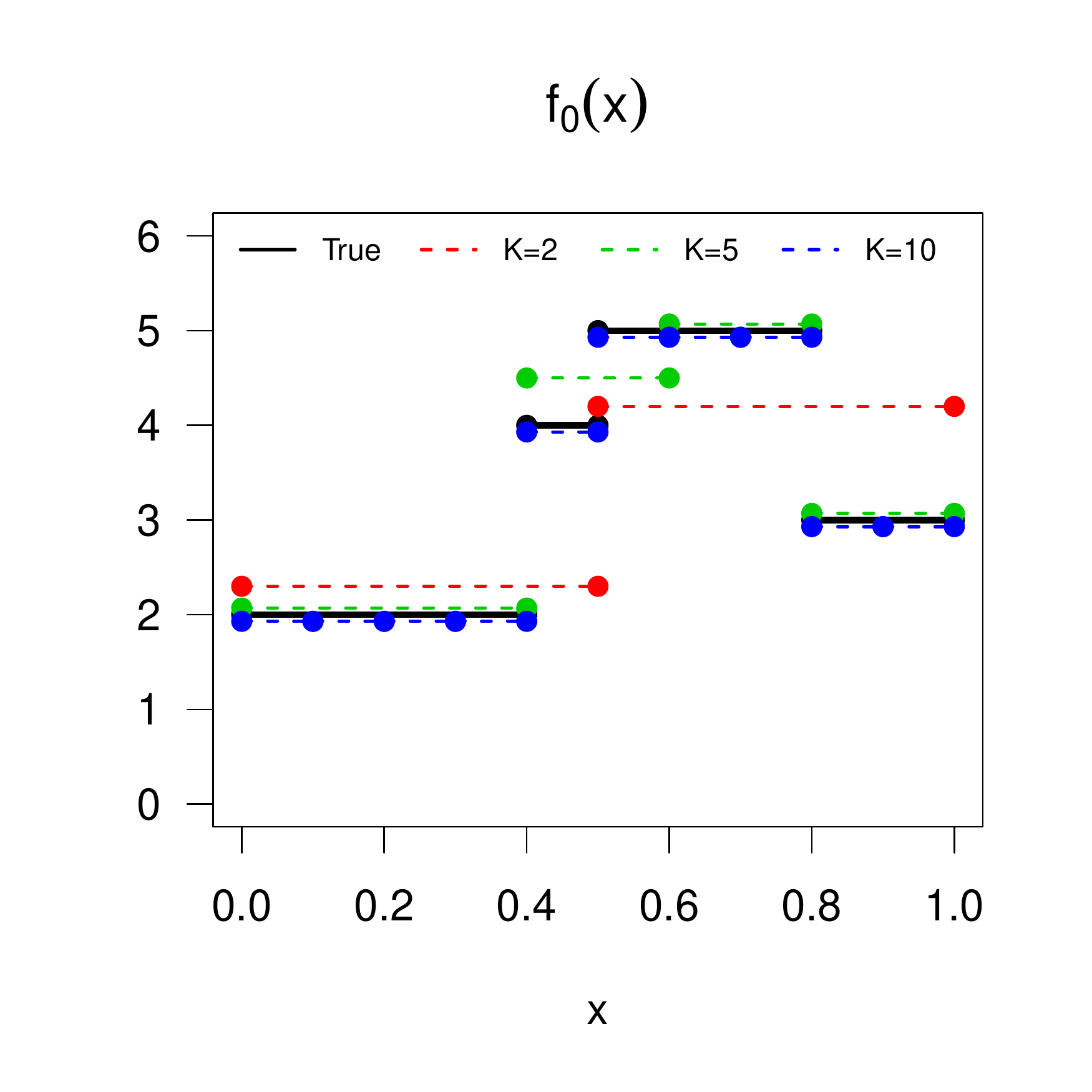}
  \caption{(Left) Prior on the tree size for several values of $c_K$, (Right) Best approximations of $f_0$ (in the $\ell_2$ sense) by step functions supported on equispaced blocks  of size $K \in \{2, 5, 10\}$. }
  \label{fig:prior}
\end{figure}

\subsection{Prior  $\pi_{\Omega}(\cdot\C K)$ on Interval Partitions $\{\Omega_k\}_{k=1}^K$ }
After selecting the number of steps $K$ from $\pi_K(k)$, we  assign a prior over interval partitions $\pi_{\Omega}(\cdot\C K)$.
We will consider two important special cases.

\subsubsection{Equivalent Blocks}\label{sec:EB}
Perhaps the simplest  partition is based on statistically equivalent blocks \cite{anderson}, where all the cells are required to have the same number of points. This is also known as the $K$-spacing rule that partitions the unit interval using order statistics of the observations. 

\begin{definition}(Equivalent Blocks)\label{prior_EB}
Let $x_{(i)}$ denote the $i^{th}$ order statistic of $\x=(x_1,\dots,x_n)'$, where $x_{(n)}\equiv 1$ and $n=Kc$ for some $c\in\mathbb{N}\backslash\{0\}$. Denote by $x_{(0)}\equiv 0$. 
A partition $\{\Omega_k\}_{k=1}^K$ consists of $K$ equivalent blocks, when
$\Omega_k=(x_{(j_k)},x_{(j_{k+1})}]$, where $j_k=(k-1)c$.
\end{definition}
A variant of this definition can be obtained in terms of interval lengths rather than numbers of observations. 
\begin{definition}(Equispaced Blocks)\label{prior_EB2}
A partition $\{\Omega_k\}_{k=1}^K$ consists of $K$ {\sl equispaced blocks} $\Omega_k$, when
$
\Omega_k = \left(\frac{k-1}{K}, \frac{k}{K}\right]\quad\text{for}\quad k=1,\dots, K.
$\end{definition}
 When $K=2^s$ for some $s\in\N\backslash\{0\}$, the equispaced partition corresponds to a  full complete binary tree with splits at dyadic rationals. 
If the observations $x_i$ lie on a regular grid (Assumption \ref{ass:fixedgrid}), then Definition \ref{prior_EB} and \ref{prior_EB2} are essentially equivalent. We will thereby focus on equivalent blocks (EB) and denote such a partition  (for a given $K>0$) with $\bm{\Omega}_K^{EB}$. Because there is only one such partition for each $K$, the prior $\pi_{\Omega}(\cdot| K)$ has a single point mass mass at $\bm{\Omega}_K^{EB}$.
With $\bm{\Omega}^{EB}=\cup_{K=1}^\infty\bm{\Omega}_K^{EB}$ we denote the set of all EB partitions for $K=1,2,\dots$. We will use these partitioning schemes as a jump-off point.

\subsubsection{Balanced Intervals}\label{sec:balance}
Equivalent (equispaced) blocks are deterministic and, as such, do not provide much room for learning about the actual location of  jumps in $f_0$. Balanced intervals, introduced below, are a richer class of partitions that tolerate a bit more imbalance.
First, we introduce the notion of  cell counts $\mu(\Omega_k)$.  For each interval $\Omega_k$, we write
\vspace{-0.1cm}
\begin{equation}\label{cell:count}
\mu(\Omega_k)=\frac{1}{n}\sum_{i=1}^n\mathbb{I}(x_i\in\Omega_k),
\end{equation}
the proportion of observations falling inside $\Omega_k$.  
Note that for equivalent blocks, we can write $\mu(\Omega_1)=\cdots=\mu(\Omega_K)=c/n=1/K$.
\begin{definition}(Balanced Intervals)\label{ass:balance}
A partition $\{\Omega_k\}_{k=1}^K$ is {\sl balanced} if
\begin{equation}\label{eq:balance}
\frac{C^2_{min}}{K}\leq \mu(\Omega_k)\leq \frac{C^2_{max}}{K} \quad\text{for all}\quad k=1,\dots, K 
\end{equation}
 for some universal constants $C_{min}\leq 1\leq C_{max}$ not depending on $K$.
\end{definition}
The following variant of the balancing condition uses interval widths rather than cell counts:
${\widetilde{C}^2_{min}}/{K}\leq |\Omega_k|\leq {\widetilde{C}^2_{max}}/{K}$.  Again, under Assumption 1, these two definitions are equivalent. In the sequel, we will denote by $\bm{\Omega}^{BI}_K$ the set of all balanced  partitions consisting of $K$ intervals and by $\bm{\Omega}^{BI}=\cup_{K=1}^\infty\bm{\Omega}^{BI}_K$ the set of all balanced intervals of sizes $K=1,2,\dots$. 
It is worth pointing out that the balance assumption on the interval partitions can  be relaxed, at the expense of a log factor in the concentration rate \cite{Rockova2017}.


With balanced partitions, the $K^{th}$ shell  $\mathcal{F}_K$ of the approximating space  $\mathcal{F}$ in  \eqref{shells} consists of all step functions that are supported on partitions $\bm{\Omega}_{K}^{BI}$ and have  $K-1$ points of discontinuity $u_k\in I_n\equiv \{x_i:i=1,\dots,n-1\}$ for  $k=1,\dots K-1$. For equispaced blocks in Definition \ref{prior_EB2},  we assumed that  the points of subdivision were {\sl deterministic}, i.e. $u_k=k/K$. For balanced partitions, 
we assume that  $u_k$ are {\sl random} and chosen amongst the {\sl observed values $x_i$}.  
The order statistics of the vector of splits $\u=(u_1,\dots,u_{K-1})'$ uniquely  define a  segmentation of $[0,1]$ into $K$ intervals $\Omega_k=(u_{(k-1)},u_{(k)}]$, where  $u_{(k)}$ designates the $k^{th}$ smallest value in $\u$ and $u_{(0)}\equiv0, u_{(K)}=x_{(n)}\equiv1$. 

Our prior over balanced  intervals $\pi_\Omega(\cdot\C K)$ will be defined implicitly through a {\sl uniform} prior  over  the split vectors $\u$.
Namely, the prior over balanced partitions $\bm{\Omega}^{BI}_K$ satisfies
\begin{equation}\label{prior_omega}
\pi_{\Omega}(\{\Omega_k\}_{k=1}^K\C K)=\frac{1}{\mathrm{card}(\bm{\Omega}^{BI}_K)} \1\left(\{\Omega_k\}_{k=1}^K\in\bm{\Omega}^{BI}_K\right).
\end{equation}
In the following Lemma, we obtain  upper bounds on $\mathrm{card}(\bm{\Omega}^{BI}_K)$ and discuss how they relate to an old problem in geometric probability.
In the sequel, we denote with $|\Omega_k|$  the lengths of the segments defined through the split points $\u$.

\begin{figure}[!t]
     \subfigure[$K=2$]{
     \begin{minipage}[t]{0.5\textwidth}
       \hskip-1pc  \scalebox{0.35}{\includegraphics{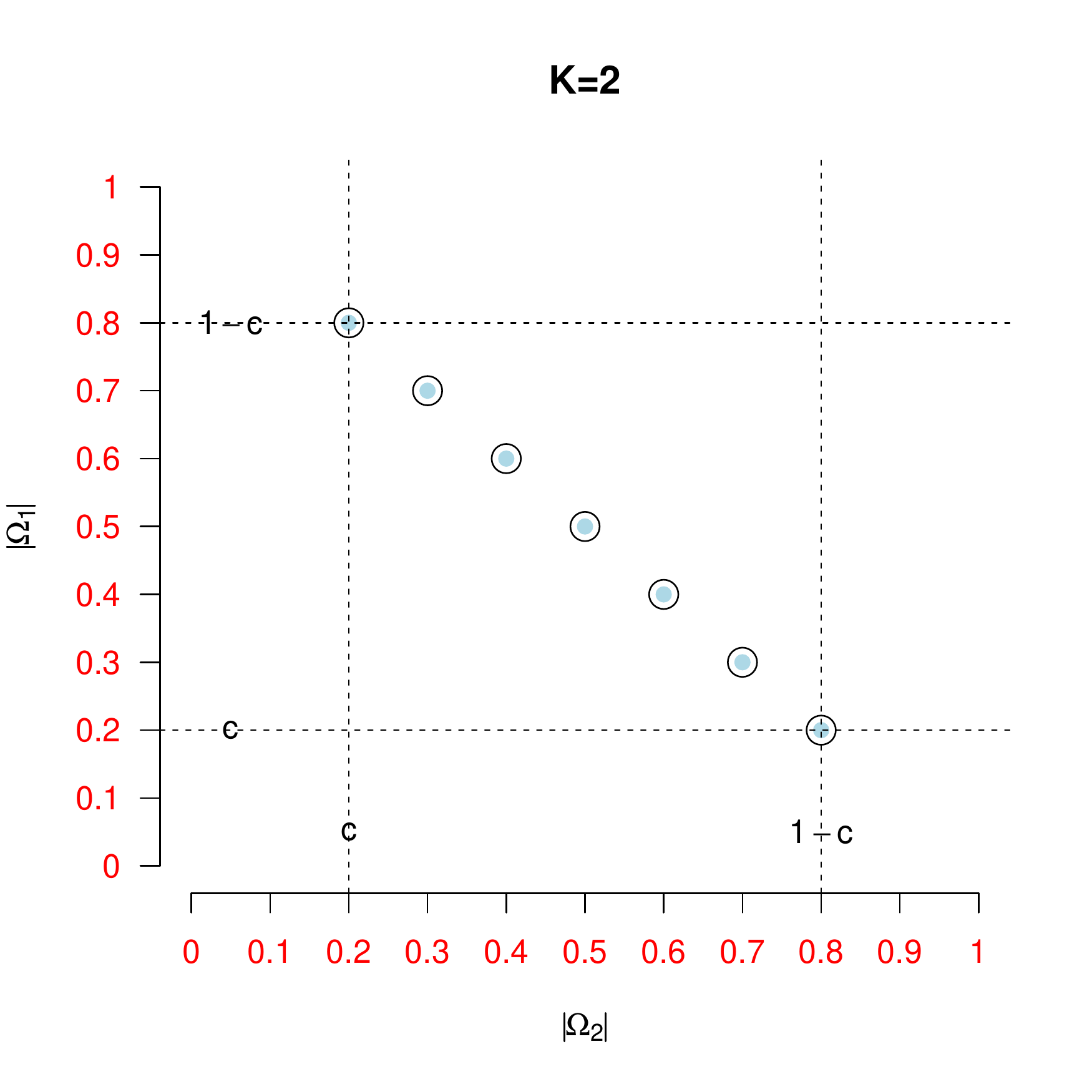}}  \label{fig1:left}
    \end{minipage}}
     \subfigure[$K=3$]{
    \begin{minipage}[t]{0.5\textwidth}
    \hskip-1pc  \scalebox{0.35}{\includegraphics{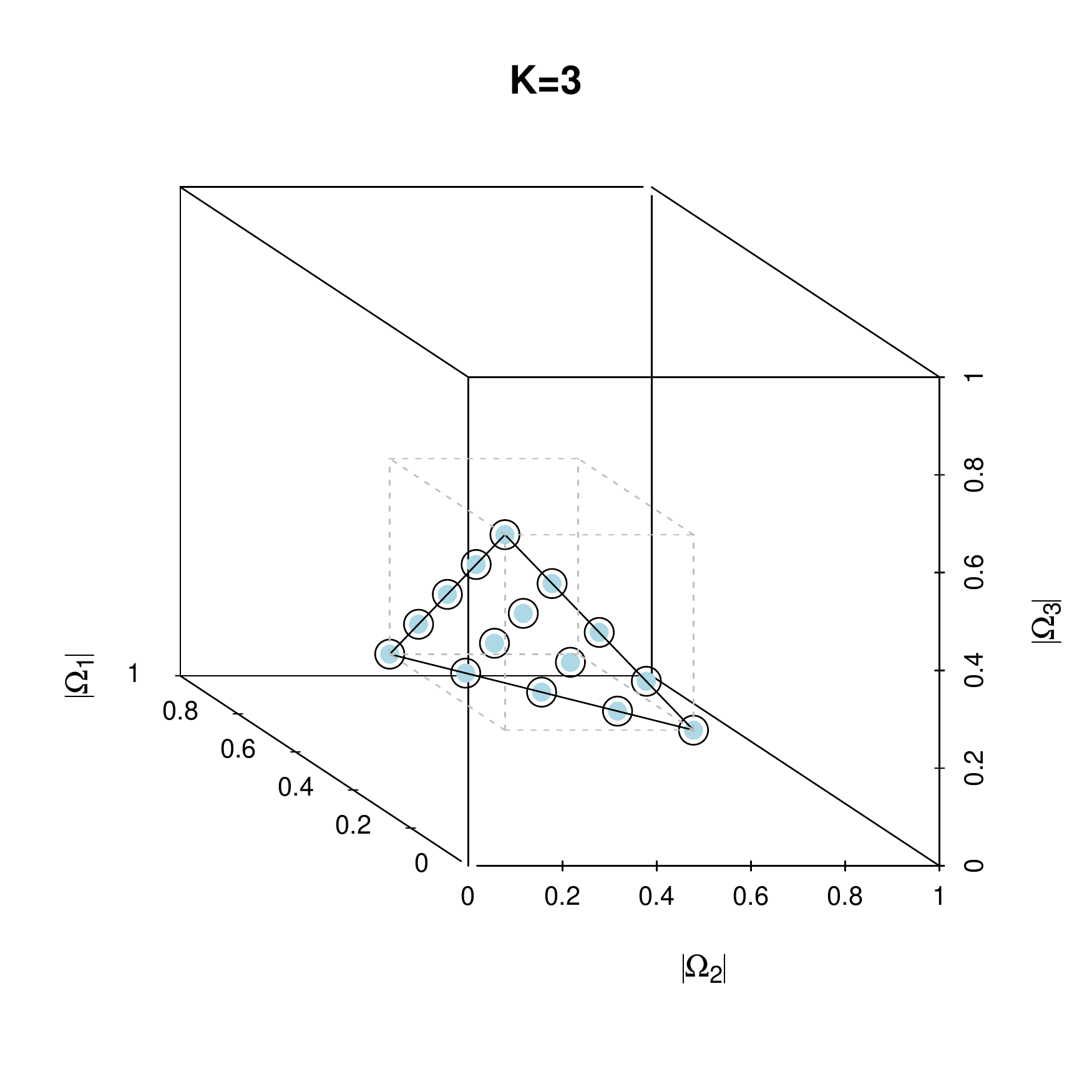}}     \label{fig1:right}
    \end{minipage}}
\caption{Two sets $E_K$ of possible stick lengths that satisfy the minimal cell-size condition $|\Omega_k|\geq C$ with $n=10, C=2/n$ and $K=2,3$.}\label{fig:1}
\end{figure}

\begin{lemma}\label{lemma:upper_cellsize}
Assume that $\u=(u_1,\dots, u_{K-1})'$ is a vector of independent random variables obtained by uniform sampling (without replacement) from $I_n$. 
Then  under Assumption 1, we have for $1/n<C<1/K$
\vspace{-0.4cm}
\begin{align}
\Pi\left(\min_{1\leq k\leq K}|\Omega_{k}|\geq C \right)= \frac{{{\lfloor n(1-K\,C)\rfloor+K-1} \choose {K-1}}}{{{n-1}\choose {K-1}}}\label{prob_mincell}
\end{align}
and
\vspace{-0.3cm}
\begin{align}\label{prob_maxcell}
\Pi\left(\max_{1\leq k\leq K}|\Omega_{k}|\leq C \right)=1-
\sum_{k=1}^{\widetilde{n}}(-1)^k{n-1\choose k} \frac{{{\lfloor n(1-k\,C)\rfloor+K-1} \choose {K-1}}}{{{n-1}\choose {K-1}}},\end{align}
where $\widetilde{n}=\min\{n-1,\lfloor 1/C\rfloor\}$.
\end{lemma}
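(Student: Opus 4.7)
The plan is to recast both identities as combinatorial counts of integer compositions of $n$. By Assumption \ref{ass:fixedgrid} every split lies on the grid, so writing $u_{(k)}=j_{(k)}/n$ with $j_{(0)}=0$ and $j_{(K)}=n$, the integer gaps $d_k=j_{(k)}-j_{(k-1)}$ form a composition of $n$ into $K$ positive parts. Uniform sampling of $K-1$ splits without replacement from $I_n$ is in bijection with choosing $K-1$ of the $n-1$ internal grid points, which endows the space of compositions with the uniform distribution on $\binom{n-1}{K-1}$ outcomes. Since $|\Omega_k|=d_k/n$, the events in \eqref{prob_mincell} and \eqref{prob_maxcell} become purely combinatorial constraints on the $d_k$'s, and this is where the common denominator $\binom{n-1}{K-1}$ comes from.

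For the minimum identity \eqref{prob_mincell}, I would count compositions with $d_k\geq nC$ for every $k$, which for integer $d_k$ is $d_k\geq \lceil nC\rceil$. Substituting $e_k=d_k-\lceil nC\rceil\geq 0$ converts the problem to counting nonnegative integer solutions of $\sum_k e_k = n - K\lceil nC\rceil$, and stars-and-bars yields $\binom{n-K\lceil nC\rceil+K-1}{K-1}$. Because $C$ only enters through the integer $\lceil nC\rceil$, and $n-K\lceil nC\rceil$ agrees with $\lfloor n(1-KC)\rfloor$ on the intervals of constancy between grid-aligned values of $C$, this reproduces the stated numerator; dividing by $\binom{n-1}{K-1}$ finishes this half.

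For the maximum identity \eqref{prob_maxcell}, I would apply inclusion--exclusion to the events $A_k=\{|\Omega_k|>C\}$:
\[
\Pi\!\left(\max_{1\leq k\leq K}|\Omega_k|\leq C\right) = 1 - \sum_{j\geq 1}(-1)^{j+1}\sum_{|S|=j}\Pi\!\left(\bigcap_{k\in S}A_k\right).
\]
Exchangeability of the gap vector $(d_1,\dots,d_K)$ makes each inner probability depend only on $|S|=j$, and a second stars-and-bars substitution on the $j$ constrained slots (now with $d_k\geq \lfloor nC\rfloor+1$) produces a binomial coefficient of the form $\binom{\lfloor n(1-jC)\rfloor+K-1}{K-1}$, weighted by the combinatorial factor that counts the choices of $S$. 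The sum truncates at $j=\widetilde{n}$ because at most $\lfloor 1/C\rfloor$ cells can simultaneously exceed length $C$. The main obstacle is the careful bookkeeping of floor and ceiling operations so that the integer counts match $\lfloor n(1-jC)\rfloor$ verbatim, a technicality forced by the discreteness of the grid; structurally these identities are discrete analogues of the classical arc-covering formulas of Shepp and Feller, which is precisely the connection to geometric probability highlighted in the paper.
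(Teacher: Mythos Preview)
Your argument for \eqref{prob_mincell} is correct but takes a genuinely different route from the paper. The paper argues geometrically, following Flatto and Konheim: it identifies the admissible length vectors $(|\Omega_1|,\dots,|\Omega_K|)$ with lattice points inside a $(K{-}1)$-simplex with vertices $v_r=(1-KC)e_r+C\sum_k e_k$, verifies the count explicitly for $K=2$ and $K=3$, and then completes an induction on $K$ via the hockey-stick identity $\sum_{j=K-1}^{N}\binom{j}{K-1}=\binom{N+1}{K}$. Your substitution $e_k=d_k-\lceil nC\rceil$ followed by a direct stars-and-bars count bypasses both the geometric picture and the induction, yielding the same binomial coefficient in one line. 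The paper's route buys the explicit link to the Flatto--Konheim arc-covering framework that the authors want to highlight; yours is shorter and entirely self-contained.

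For \eqref{prob_maxcell} the two approaches coincide: the paper's proof also just says ``writing the event as a complement of the union of events and applying the method of inclusion--exclusion'' without further detail. One caution: your inclusion--exclusion runs over subsets $S\subset\{1,\dots,K\}$ of cells, so the ``combinatorial factor that counts the choices of $S$'' is $\binom{K}{j}$, whereas the printed statement carries $\binom{n-1}{k}$. Carried out carefully your computation will produce $\binom{K}{j}$ (the discrete analogue of the classical $\binom{K}{j}(1-jC)_+^{K-1}$ formula), so be prepared for your final expression not to match the displayed \eqref{prob_maxcell} symbol-for-symbol; this reflects a misprint in the statement rather than a flaw in your plan.
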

\vspace{-0.2cm}
\begin{proof}
The denominator of \eqref{prob_mincell} follows from the fact that there are $n-1$ possible splits for the $K-1$ points of discontinuity $u_k$.
The numerator is obtained after adapting the proof of Lemma 2 of Flatto and Konheim \cite{flatto_konheim}.
Without lost of generality, we will assume that $C=a/n$ for some $a=1,\dots,\lfloor n/K\rfloor$ so that $n(1-KC)$ is an integer. Because the  jumps $u_k$ can only occur on the grid $I_n$, we have
$|\Omega_k|=j/n$ for some $j=1,\dots,n-1$. It follows from Lemma 1 of Flatto and Konheim \cite{flatto_konheim} that the 
set $E_K=\{|\Omega_k|:\sum_{k=1}^{K}|\Omega_k|=1\,\,\text{and}\,\, |\Omega_k|\geq C\,\text{for}\,k=1,\dots,K\}$ lies in the interior of a convex hull of $K$ points
$v_r=(1-KC)e_r+C\sum_{k=1}^Ke_k$ for  $r=1,\dots, K$, where $e_r=(e_{r1},\dots,e_{rK})'$ are unit base vectors, i.e. $e_{rj}=\mathbb{I}(r=j)$.
Two examples of the set $E_K$ (for $K=2$ and $K=3$) are depicted in Figure \ref{fig:1}. In both figures,  $n=10$ (i.e. $9$ candidate split points)  and $a=2$. With $K=2$ (Figure \ref{fig1:left}),  there are  
only $7= {{n(1-KC)+K-1} \choose {K-1}}$ pairs of interval lengths $(|\Omega_1|,|\Omega_2|)'$ that satisfy the minimal cell condition. These points lie on a grid between the two vertices $v_1=(1-C,C)$ and $v_2=(C,1-C)$.
With $K=3$, the convex hull of points $v_1=(1-2C,C,C)',v_2=(C,1-2C,C)'$ and $v_1=(C,C,1-2C)'$ corresponds to a diagonal dissection of a cube of a side length $(1-3C)$ (Figure \ref{fig1:right}, again  with $a=2$ and $n=10$). 
The number of lattice points in the interior (and on the boundary) of such tetrahedron corresponds to an arithmetic sum $\frac{1}{2}{(n-3a+2)(n-3a+1)}={{n-3a+2}\choose 2}$. 
So far, we showed \eqref{prob_mincell} for $K=2$ and $K=3$.  To complete the induction argument, suppose that the formula holds for some arbitrary $K>0$.  Then  the size of the lattice inside (and on the boundary) of a 
$(K+1)$-tetrahedron of a side length $[1-(K+1)C]$ can be obtained by summing lattice sizes inside $K$-tetrahedrons of increasing side lengths {$0,\sqrt{2}/n, 2\sqrt{2}/n ,\dots,[1-(K+1)C]\sqrt{2}/n$}, i.e.
$$
\sum_{j=K-1}^{n[1-(K+1)C]+K-1}{j\choose K-1}={n[1-(K+1)C]+K \choose K},
$$
where we used the fact $\sum_{j=K}^N{j\choose K}={N+1 \choose K+1}$. The second statement \eqref{prob_maxcell} is obtained by writing the event as a complement of the union of events and applying  the method of inclusion-exclusion.$\qedhere$
\end{proof}
\begin{remark}
Flatto and Konheim \cite{flatto_konheim} showed that the probability of covering a circle with random arcs of length $C$ is equal to the probability that all segments of the unit interval, obtained with iid random uniform splits, are smaller than $C$. Similarly, the probability \eqref{prob_maxcell} could be related to the probability of covering the circle with random arcs whose endpoints are chosen from a grid of $n-1$ equidistant points on the circumference.
\end{remark}
\vspace{-0.3cm}
There are ${n-1\choose K-1}$  partitions of size $K$, of which  ${{\lfloor n(1-\widetilde{C}^2_{min})\rfloor+K-1} \choose {K-1}}$ satisfy the minimal cell  width balancing condition (where $\widetilde{C}^2_{min}>K/n$). This number gives  an upper bound on the combinatorial complexity of balanced partitions $\mathrm{card}(\bm{\Omega}^{BI}_K)$. 

\vspace{-0.2cm}
\subsection{Prior $\pi(\b\C K)$ on Step Heights $\b$}
To complete the prior on $\mathcal{F}^K$,  we take independent normal priors on each of the coefficients. Namely
\begin{equation}\label{eq:prior_beta}
\pi(\b\C K)=\prod_{k=1}^K\phi(\beta_k),
\end{equation}
where $\phi(\cdot)$ is the standard normal density.

\section{Main Results}
A crucial ingredient of our proof will be understanding  how well one can approximate $f_0$ with other step functions (supported on partitions $\bm{\Omega}$, which are either equivalent blocks $\bm{\Omega}^{EB}$ or balanced partitions $\bm{\Omega}^{BI}$).  We will describe the approximation error in terms of the overlap between the true partition $\{\Omega^0_k\}_{k=1}^{K_0}$ and the approximating partitions $\{\Omega_k\}_{k=1}^K\in\bm{\Omega}$. More formally, we define the {\sl restricted cell count} (according to Nobel \cite{nobel}) as
$$
m\left(V; \{\Omega_k^0\}_{k=1}^{K_0}\right)=|\Omega_k^0: \Omega_k^0\cap V\neq \emptyset|,
$$
the number of cells in $\{\Omega_k^0\}_{k=1}^{K_0}$ that overlap with an interval $V\subset [0,1]$. Next, we define the {\sl complexity}  of $f_0$ as the smallest size of a partition in $\bm{\Omega}$ needed to completely cover $f_0$ without any overlap.

\begin{definition}(Complexity of $f_0$ w.r.t. $\bm{\Omega}$)\label{def:K0}
We define $K(f_0,\bm{\Omega})$ as the smallest $K$ such that there exists a $K$-partition $\{\Omega_k\}_{k=1}^K$ in the class of partitions $\bm{\Omega}$ for which
$$
m\left(\Omega_k; \{\Omega_k^0\}_{k=1}^{K_0}\right)=1\quad \text{for all} \quad k=1,\dots, K.
$$
 The number $K(f_0,\bm{\Omega})$ will be referred to as the {\sl complexity of $f_0$ w.r.t. $\bm{\Omega}$}. 
\end{definition}

The complexity number $K(f_0,\bm{\Omega})$  indicates the optimal number of steps needed to approximate $f_0$  with a step function (supported  on partitions in $\bm{\Omega}$) without any error.  It depends on the true number of jumps  $K_0$ as well as the true interval lengths $|\Omega_k^0|$. {If  the minimal partition $\{\Omega_k^0\}_{k=1}^{K_0}$ resided in the approximating class, i.e. $\{\Omega_k^0\}_{k=1}^{K_0}\in\bm{\Omega}$, then we would obtain $K(f_0,\bm{\Omega})=K_0$,  the true number of steps.}
On the other hand, when  $\{\Omega_k^0\}_{k=1}^{K_0}\notin \bm{\Omega}$, the complexity number $K(f_0,\bm{\Omega})$ can be much larger. This is illustrated in Figure \ref{fig:prior} (right), where the true partition $\{\Omega_k^0\}_{k=1}^{K_0}$ consists of $K_0=4$ unequal pieces and we approximate it with equispaced blocks with $K=2,5,10$ steps. Because the intervals $\Omega_k^0$ are not equal and the smallest one has a length $1/10$, we need $K(f_0,\bm{\Omega}^{EB})=10$  equispaced blocks to perfectly approximate  $f_0$.  For our analysis, we  do not need to assume that $\{\Omega_k^0\}_{k=1}^{K_0}\in\bm{\Omega}$ (i.e. $f_0$ does not need to be inside the approximating class) or  that $K(f_0,\bm{\Omega})$ is finite. The complexity number can increase with $n$, where sharper performance is obtained when $f_0$ can be approximated error-free with  some $f\in\bm{\Omega}$, where $f$  has a small number of discontinuities relative to $n$. 

Another way to view $K(f_0,\bm{\Omega})$ is as the ideal partition size on which the posterior should concentrate. If this number were known, we could achieve a near-minimax posterior concentration rate $n^{-1/2}\sqrt{K(f_0,\bm{\Omega})\log[n/K(f_0,\bm{\Omega})]}$ (Remark \ref{remark}).  The actual minimax rate for estimating a  piece-wise constant $f_0$ (consisting of $K_0>2$ pieces) is $n^{-1/2}\sqrt{K_0\log (n/K_0)}$ \cite{Gao}. 
In our main results, we will target the nearly optimal rate expressed in terms of $K(f_0,\bm{\Omega})$.  

\subsection{Posterior Concentration for Equivalent Blocks}
Our first result shows that the minimax rate is nearly achieved, without any assumptions on the number of pieces of $f_0$ or the sizes of the pieces. 


\begin{theorem}(Equivalent Blocks)\label{thm:main_step}
Let $f_0 : [0,1] \to \mathbb{R}$ be a step function with $K_0$ steps, where $K_0$ is unknown. Denote by $\mathcal{F}$ the set of all step functions supported on equivalent blocks, equipped with priors $\pi_K(\cdot)$  and $\pi(\b \mid K)$ as in \eqref{priorK} and \eqref{eq:prior_beta}.  Denote with $K_{f_0}\equiv K(f_0,\bm{\Omega}^{EB})$ and assume $\|\b^0\|^2_\infty\lesssim\log n$ and $K_{f_0}\lesssim \sqrt{n}$. Then, under Assumption \ref{ass:fixedgrid}, we have 
\begin{equation}\label{eq:conc}
\Pi\left( f \in \mathcal{F}: \|f - f_0\|_n \geq M_n n^{-1/2}\sqrt{K_{f_0}\log{(n/K_{f_0})}} \mid \Y^{(n)}\right) \to 0
\end{equation}
in $P_{f_0}^n$-probability, for every $M_n \to \infty$ as $n \to \infty$.
\end{theorem}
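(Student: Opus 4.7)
The plan is to verify the three standard conditions of Ghosal, Ghosh and van der Vaart for posterior contraction with rate $\varepsilon_n = n^{-1/2}\sqrt{K_{f_0}\log(n/K_{f_0})}$: a prior concentration (KL) condition, a sieve/entropy condition, and a prior mass condition on the complement of the sieve. Because the noise is Gaussian with known variance and the covariates are fixed, in this setting the empirical $L_2$-distance $\|\cdot\|_n$ is essentially equivalent to the KL and Hellinger affinities, so we may work entirely with $\|\cdot\|_n$.

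For the prior concentration condition I would exploit the definition of the complexity number: by definition of $K_{f_0}=K(f_0,\bm{\Omega}^{EB})$ there exists an equivalent-blocks partition of size $K_{f_0}$ on which $f_0$ is exactly representable as $f_{\bm{\beta}^0}$. Because EB partitions are deterministic and equispaced on the grid, one has the identity $\|f_{\bm{\beta}} - f_{\bm{\beta}^0}\|_n^2 = K_{f_0}^{-1}\|\bm{\beta}-\bm{\beta}^0\|_2^2$. Hence the event $\{\|f-f_0\|_n<\varepsilon_n\}$ contains the coordinatewise event $\{|\beta_k-\beta_k^0|<\varepsilon_n,\, k=1,\dots,K_{f_0}\}$. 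Under the iid standard normal prior on the coordinates, each such factor is bounded below by $\varepsilon_n\,\phi(\|\bm{\beta}^0\|_\infty+\varepsilon_n)$, so that using $\|\bm{\beta}^0\|_\infty^2\lesssim \log n$ and multiplying by $\pi_K(K_{f_0})\gtrsim e^{-c_KK_{f_0}\log K_{f_0}}$ yields
\[
\log\Pi\bigl(\|f-f_0\|_n<\varepsilon_n\bigr)\gtrsim -K_{f_0}\log n - c_KK_{f_0}\log K_{f_0} \gtrsim -n\varepsilon_n^2,
\]
as required, where the final bound uses $K_{f_0}\lesssim\sqrt n$.

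For the sieve I would take $\mathcal{F}_n=\bigcup_{K=1}^{K_n}\{f_{\bm{\beta}}\in\mathcal{F}_K: \|\bm{\beta}\|_\infty\leq B_n\}$ with $K_n\asymp K_{f_0}\log(n/K_{f_0})$ and $B_n\asymp \sqrt{n\varepsilon_n^2}$. On each shell the map $\bm{\beta}\mapsto f_{\bm{\beta}}$ is a bijection with $\|f_{\bm{\beta}}-f_{\bm{\beta}'}\|_n=K^{-1/2}\|\bm{\beta}-\bm{\beta}'\|_2$, so an $\varepsilon_n$-cover of the shell in $\|\cdot\|_n$ needs at most $(B_n/\varepsilon_n)^K$ points, and summing over $K\leq K_n$ gives $\log N(\varepsilon_n,\mathcal{F}_n,\|\cdot\|_n)\lesssim K_n\log(B_n/\varepsilon_n)\lesssim n\varepsilon_n^2$ after tuning constants. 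For the complement, the prior tail $\sum_{K>K_n}\pi_K(K)\lesssim e^{-c_KK_n\log K_n}$ is made smaller than $e^{-(C+4)n\varepsilon_n^2}$ by the choice of $K_n$, and the Gaussian tail bound on $\|\bm{\beta}\|_\infty>B_n$ contributes at most $K_n e^{-B_n^2/2}$, which is similarly negligible given $B_n$.

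The main obstacle is that none of the existing results for Bayesian histograms apply verbatim, because the true partition need not belong to $\bm{\Omega}^{EB}$; the crux is to realize that the correct notion controlling the rate is not $K_0$ but the complexity $K_{f_0}$, which is exactly what makes the approximation error vanish on the matching shell. Once this is in place, the remaining task is essentially bookkeeping to verify that the constants in the prior $\pi_K$ and the Gaussian prior on $\bm{\beta}$ can be tuned so that all three conditions hold simultaneously; this is then combined with the standard Ghosal--van der Vaart theorem to conclude \eqref{eq:conc}.
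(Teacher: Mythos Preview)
Your overall strategy---verifying Ghosal--van der Vaart type conditions with a sieve truncated at $K_n\asymp K_{f_0}\log(n/K_{f_0})$ and exploiting the exact representability of $f_0$ on the EB partition of size $K_{f_0}$ for the small-ball bound---matches the paper. However, your entropy step does not close as written. With $K_n\asymp n\varepsilon_n^2$ and $B_n\asymp\sqrt{n\varepsilon_n^2}$ you have $B_n/\varepsilon_n=\sqrt{n}$, hence
\[
K_n\log(B_n/\varepsilon_n)\;\asymp\; n\varepsilon_n^2\cdot\tfrac12\log n,
\]
which exceeds $n\varepsilon_n^2$ by a factor $\log n$ that no tuning of constants can absorb. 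Shrinking $K_n$ to compensate breaks your complement bound: for bounded $K_{f_0}$ one has $n\varepsilon_n^2\asymp\log n$, so $K_n\lesssim n\varepsilon_n^2/\log n$ forces $K_n=O(1)$ and $c_KK_n\log K_n=O(1)\not\gtrsim n\varepsilon_n^2$. And $B_n$ cannot be made smaller without losing the Gaussian tail control $K_ne^{-B_n^2/2}\le e^{-Cn\varepsilon_n^2}$. Thus the global-entropy route with a $\|\bm\beta\|_\infty$-truncated sieve delivers at best a rate inflated by $\sqrt{\log n}$, not the one stated in the theorem.

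The paper avoids this by invoking the \emph{local} entropy version of the contraction theorem (Theorem~4 of Ghosal--van der Vaart, 2007), which only asks one to cover $\{f\in\mathcal{F}_K:\|f-f_0\|_n<\varepsilon\}$ by $\|\cdot\|_n$-balls of radius $\varepsilon/36$. Because $\|f_{\bm\alpha}-f_{\bm\beta}\|_n=K^{-1/2}\|\bm\alpha-\bm\beta\|_2$ on each shell, this is a ball-by-smaller-ball covering in $\mathbb{R}^K$ and costs at most $108^K$, independently of the magnitude of the coefficients; summing the geometric series gives $\log N\lesssim k_n\lesssim n\varepsilon_n^2$ with no $\log n$ loss, and no truncation on $\bm\beta$ is needed in the sieve. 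Your prior-mass lower bound and the tail bound on $\pi_K$ are otherwise fine and correspond to the paper's treatment of conditions (C2)--(C3).
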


Before we proceed with the proof,  a few remarks ought to be made. First, it is worthwhile to emphasize that the statement in Theorem \ref{thm:main_step} is a frequentist one as it relates to an aggregated behavior of the posterior distributions obtained under the true generative model $P_{f_0}^n$. 

Second, the theorem shows that the Bayesian procedure performs an automatic adaptation to $K(f_0,\bm{\Omega}^{EB})$. The posterior will concentrate on  EB partitions that are fine enough to approximate $f_0$ well.  Thus, we are able to recover the true function as well as if we knew $K(f_0,\bm{\Omega}^{EB})$.  

Third, it is worth mentioning that, under Assumption \ref{ass:fixedgrid},  Theorem \ref{thm:main_step} holds for equivalent as well as equisized blocks. In this vein, it describes the speed of posterior concentration for {\sl dyadic regression trees}. Indeed, as mentioned previously, with  $K=2^s$ for some $s\in\mathbb{N}\backslash\{0\}$, the equisized  partition corresponds to a full binary tree with splits at dyadic rationals.

Another interesting insight is that the Gaussian prior \eqref{eq:prior_beta}, while selected for mathematical convenience, turns out to be sufficient for optimal recovery.  In other words, despite the relatively large amount of {mass  near zero}, the Gaussian prior does not rule out optimal posterior concentration. Our standard normal prior is a simpler version of the Bayesian CART prior,  which determines the variance from the  data \cite{Chipman1998}.


{Let $K_{f_0}\equiv K(f_0,\bm{\Omega}^{EB}) $ be as in Definition \ref{def:K0}. Theorem \ref{thm:main_step} is proved by verifying the three conditions of Theorem 4 of \cite{Ghosal2007}, for {$\e_n = n^{-1/2}\sqrt{K_{f_0}\log(n/K_{f_0})}$} and $\mathcal{F}_n = \bigcup_{K=0}^{k_n} \mathcal{F}_K$, with $k_n$ of the order {$K_{f_0}{\log(n/K_{f_0})}$}. The approximating subspace $\mathcal{F}_n\subset\mathcal{F}$ should be rich enough to approximate  $f_0$ well and it should receive most of the prior mass. The conditions for posterior contraction at the rate $\e_n$ are:
\begin{enumerate}[label=({C\arabic*})]
\item \label{GV1} $\displaystyle \sup_{\e > \e_n} \log N\left(\tfrac{\e}{36}, \{f \in \mathcal{F}_n : \|f - f_0\|_n < \e\}, \|.\|_n\right) \leq n\e_n^2,$
\item \label{GV2} $\displaystyle \frac{\Pi(\mathcal{F} \backslash \mathcal{F}_n)}{\Pi(f \in \mathcal{F} : \|f-f_0\|_n^2 \leq \e_n^2)} = o(e^{-2n\e_n^2}),$
\item \label{GV3} $\displaystyle \frac{\Pi(f \in \mathcal{F}_n : j\e_n < \|f - f_0\|_n \leq 2j\e_n)}{\Pi(f \in \mathcal{F} : \|f-f_0\|_n^2 \leq \e_n^2)} \leq e^{\frac{j^2}{4}n\e_n^2}$ for all sufficiently large $j$.
\end{enumerate}

 The entropy condition \ref{GV1} restricts  attention to EB partitions with small $K$. As will be seen from the proof, the largest allowed partitions   have at most (a constant multiple of) $K_{f_0}\log{(n/K_{f_0})}$ pieces.}. 

Condition \ref{GV2} requires that the prior does not promote partitions with  more than $K_{f_0}\log{(n/K_{f_0})}$ pieces. This property is guaranteed by the exponentially decaying
 prior $\pi_K(\cdot)$, which  penalizes large partitions. 

The final condition, \ref{GV3}, requires that the prior charges a $\|.\|_n$ neighborhood of the true function. In our proof, we verify this condition by showing that the prior mass on step functions of the optimal size {$K_{f_0}$} is sufficiently large.

\begin{proof}
\vspace{-0.15cm}
We verify the three conditions \ref{GV1}, \ref{GV2} and \ref{GV3}. 

\paragraph{\ref{GV1}}
\vspace{-0.3cm}
Let $\e > \e_n$ and  $K \in \mathbb{N}$. For $f_{\bm{\alpha}},f_{\b} \in \mathcal{F}_{K}$, we have $K^{-1}\|\bm{\alpha} - \b\|_2^2 = \|f_{\bm{\alpha}} - f_{\b}\|_n^2$ because $\mu(\Omega_k) = 1/K$ for each $k$. We now argue as in the proof of Theorem 12 of \cite{Ghosal2007} to show that  $N\left(\tfrac{\e}{36}, \{f \in \mathcal{F}_K : \|f - f_0\|_n < \e\}, \|.\|_n\right)$ can be covered by the number of $\sqrt{K}\e/36$-balls required to cover a $\sqrt{K}\e$-ball in $\R^{K}$. This number is bounded above by $108^K$. Summing over $K$, we recognize a geometric series. Taking the logarithm of the result, we find that \ref{GV1} is satisfied if $\log(108)(k_n+1) \leq n\e_n^2$.

\paragraph{\ref{GV2}}
 We  bound the denominator by:
\begin{align*}
\Pi(f \in \mathcal{F} : \|f-f_0\|_n^2 \leq \e^2)
&\geq \pi_K(K_{f_0})\Pi\left(\b \in \mathbb{R}^{{K(f_0)}} : \|\b - \b^{\rm ext}_0\|_2^2 \leq \e^2 K_{f_0} \right),
\end{align*}
where $\b_0^{\rm ext} \in \mathbb{R}^{K_{f_0}}$ is an extended version of $\b^0 \in \mathbb{R}^{K_0}$, containing the coefficients for $f_0$ expressed as a step function on the partition $\{\Omega_k^0\}_{k=1}^{K_{f_0}}$. This can be bounded from below by
\begin{equation*}
\frac{\pi_K(K_{f_0})}{e^{\|\b^{\rm ext}_0\|_2^2/2}}\Pi\left(\b \in \mathbb{R}^{{K(f_0)}}:\|\b\|^2_2\leq  \e^2 K_{f_0}/2\right)> \frac{\pi_K(K_{f_0})}{e^{\|\b^{\rm ext}_0\|_2^2/2}} \int_0^{\e^2 K_{f_0}/2} 
\frac{x^{K_{f_0}/2 -1} e^{-x/2}}{2^{K_{f_0}/2}\Gamma(K_{f_0}/2)}dx.
\end{equation*}
We bound this from below by bounding the exponential at the upper integration limit, yielding:
\begin{equation}\label{eq:lowerboundball}
 \frac{\pi_K(K_{f_0})}{e^{\|\b^{\rm ext}_0\|_2^2/2}}\frac{e^{-\varepsilon^2K_{f_0}/4}}{2^{K_{f_0}}\Gamma(K_{f_0}/2+1)}\varepsilon^{K_{f_0}}K_{f_0}^{K_{f_0}/2}. 
\end{equation}
For $\e = \e_n \to 0$, we thus find that the denominator in  \ref{GV2} can be lower bounded with $e^{K_{f_0}\log \e_n-c_K\,K_{f_0}\log K_{f_0}-\|\b_0^{ext}\|^2_2/2-K_{f_0}/2[\log 2+\e_n^2/2]}$. We bound the numerator:
\begin{align*}
\Pi(\mathcal{F} \backslash \mathcal{F}_n) &= \Pi\left( \bigcup_{k = k_n + 1}^\infty \mathcal{F}_k \right)
\propto \sum_{k=k_n + 1}^\infty e^{-c_Kk\log{k}}
\leq e^{-c_K(k_n+1)\log(k_n+1)} + \int_{k_n + 1}^\infty e^{-c_Kx\log{x}},
\end{align*}
which is of order {$e^{-c_K(k_n+1)\log(k_n+1)}$}. 
Combining this bound with \eqref{eq:lowerboundball}, we find that \ref{GV2} is met if:
\begin{equation*}
e^{-K_{f_0}\log{\e_n}+(c_K+1)\,K_{f_0}\log K_{f_0}+K_{f_0}\|\b^0\|_\infty^2 - c_K(k_n+1)\log(k_n+1) + 2n\e_n^2} \to 0 \text{ as } n \to \infty.
\end{equation*}

\paragraph{\ref{GV3}}
We bound the numerator by one, and use the bound \eqref{eq:lowerboundball} for the denominator. As  $\e_n \to 0$, we obtain the condition {${-K_{f_0} \log{\e_n}}+(c_K+1)K_{f_0}\log K_{f_0}+K_{f_0}\|\b^0\|_{\infty}^2 \leq {\frac{j^2}{4}n\e_n^2}$} for all sufficiently large $j$.

\paragraph{Conclusion}
With {$\e_n = n^{-1/2}\sqrt{K_{f_0}\log(n/K_{f_0})}$}, letting $k_n\propto n\e_n^2=K_{f_0}\log(n/K_{f_0})$, the condition \ref{GV1} is met. With this choice of $k_n$, the condition {\ref{GV2}} holds as well as long as $\|\b^0\|_\infty^2\lesssim \log n$ and $K_{f_0}\lesssim\sqrt{n}$.
Finally, the condition \ref{GV3} is met for $K_{f_0}\lesssim\sqrt{n}$. $\qedhere$

\end{proof}
\vspace{-0.3cm}
\begin{remark}It is worth pointing out that the proof will hold for a larger class of priors on $K$, as long as the prior shrinks at least exponentially fast (meaning that it is bounded from above by $ae^{-bK}$ for constants $a, b > 0$). However, a prior at this exponential limit will require tuning, because the optimal $a$ and $b$ will depend on $K(f_0,\bm{\Omega}^{EB})$. We recommend using the prior \eqref{prior_K} that prunes somewhat more aggressively,  because it does not require tuning by the user. Indeed, Theorem \ref{thm:main_step} holds  regardless of the choice of $c_K>0$. We conjecture, however,  that values $c_K \geq 1/K(f_0,\bm{\Omega}^{EB})$ lead to a faster concentration speed and we suggest $c_K=1$ as a default option.
\end{remark}
\begin{remark}\label{remark}
When $K_{f_0}$ is known, there is no need for assigning a prior $\pi_K(\cdot)$ and the conditions  \ref{GV1} and  \ref{GV3} are verified similarly as before, fixing the number of steps at $K_{f_0}$.
\end{remark}
\subsection{Posterior Concentration for Balanced Intervals}
An analogue of Theorem \ref{thm:main_step} can be obtained for balanced partitions from Section \ref{sec:balance} that correspond to regression trees with splits at actual observations. Now, we assume that $f_0$ is $\bm{\Omega}^{BI}$-valid and carry out the proof with $K(f_0,\bm{\Omega}^{BI})$ instead of $K(f_0,\bm{\Omega}^{EB})$. 
The posterior concentration rate is only slightly worse.

\begin{theorem}(Balanced Intervals)\label{thm:main_step_BI}
Let $f_0 : [0,1] \to \mathbb{R}$ be a step function with $K_0$ steps, where $K_0$ is unknown. Denote by $\mathcal{F}$ the set of all step functions supported on balanced intervals equipped with priors  $\pi_K(\cdot), \pi_{\Omega}(\cdot| K)$  and $\pi(\b \mid K)$ as in \eqref{priorK}, \eqref{prior_omega} and \eqref{eq:prior_beta}.  Denote with $K_{f_0}\equiv K(f_0,\bm{\Omega}^{BI})$ and assume $\|\b^0\|^2_\infty\lesssim\log^{2\beta} n$ and $K(f_0,\bm\Omega^{BI})\lesssim \sqrt{n}$. Then, under Assumption \ref{ass:fixedgrid}, we have 
\begin{equation}\label{eq:conc}
\Pi\left( f \in \mathcal{F}: \|f - f_0\|_n \geq M_n n^{-1/2}\sqrt{K_{f_0}\log^{2\beta}(n/K_{f_0})} \mid \Y^{(n)}\right) \to 0
\end{equation}
 in $P_{f_0}^n$-probability, for every $M_n \to \infty$ as $n \to \infty$, where  $\beta>1/2$.
\end{theorem}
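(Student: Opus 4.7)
The plan is to adapt the proof of Theorem~\ref{thm:main_step} by verifying the three posterior-concentration conditions \ref{GV1}--\ref{GV3} of \cite{Ghosal2007} with the rate $\e_n = n^{-1/2}\sqrt{K_{f_0}\log^{2\beta}(n/K_{f_0})}$ and a sieve $\mathcal{F}_n = \bigcup_{K=0}^{k_n}\mathcal{F}_K$. The key new element compared with Theorem~\ref{thm:main_step} is the nontrivial prior $\pi_\Omega(\cdot\C K)$: within each shell $\mathcal{F}_K$ the functions now sit on many distinct partitions, so both the entropy in \ref{GV1} and the small-ball bound needed for \ref{GV2}--\ref{GV3} acquire an extra combinatorial layer. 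I anticipate $k_n \asymp K_{f_0}\log^{2\beta-1}(n/K_{f_0})$ to be the right sieve size, and that the enumeration of balanced partitions is precisely what forces the logarithmic exponent to jump from $1$ to $2\beta$.

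For \ref{GV1} I would cover $\mathcal{F}_K$ partition by partition. For a fixed balanced partition $\{\Omega_k\}_{k=1}^K$ the identity $\|f_{\bm\alpha}-f_{\b}\|_n^2 = \sum_k \mu(\Omega_k)(\alpha_k-\beta_k)^2$ together with the balance bound $C_{min}^2/K \leq \mu(\Omega_k) \leq C_{max}^2/K$ reduces covering in $\|\cdot\|_n$ to covering in $K^{-1/2}\|\cdot\|_2$ up to constants, yielding of order $108^K$ balls per partition exactly as in Theorem~\ref{thm:main_step}. Multiplying by $\mathrm{card}(\bm{\Omega}^{BI}_K)\leq \binom{n-1}{K-1}$ from Lemma~\ref{lemma:upper_cellsize} and summing over $K\leq k_n$, the log-entropy is $\lesssim k_n\log(en/k_n)$, which matches $n\e_n^2$ for the claimed $k_n$.

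For the denominators appearing in \ref{GV2} and \ref{GV3}, Definition~\ref{def:K0} supplies a balanced partition $\{\Omega_k^\star\}_{k=1}^{K_{f_0}}\in\bm{\Omega}^{BI}_{K_{f_0}}$ on which $f_0$ is exactly representable, and restricting the prior to this partition gives
\begin{equation*}
\Pi\bigl(f\in\mathcal{F}:\|f-f_0\|_n\leq\e_n\bigr)\geq \pi_K(K_{f_0})\,\pi_\Omega(\{\Omega_k^\star\}\C K_{f_0})\,\Pi\bigl(\b:\|\b-\b_0^{\mathrm{ext}}\|_2^2\leq K_{f_0}\e_n^2/C_{max}^2\bigr).
\end{equation*}
The Gaussian ball is bounded exactly as in \eqref{eq:lowerboundball}; the new ingredient is the factor $\pi_\Omega(\{\Omega_k^\star\}\C K_{f_0})\geq 1/\binom{n-1}{K_{f_0}-1}\geq \exp(-K_{f_0}\log(en/K_{f_0}))$. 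The numerator of \ref{GV2} is bounded by the $\pi_K$ tail just as in Theorem~\ref{thm:main_step}, yielding $\Pi(\mathcal{F}\setminus\mathcal{F}_n)\lesssim \exp(-c_K k_n\log k_n)$.

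Collecting the pieces, \ref{GV2} reduces to $c_K k_n\log k_n \gtrsim K_{f_0}\log(n/K_{f_0}) + K_{f_0}\|\b^0\|_\infty^2 + 2n\e_n^2$, and \ref{GV3} to $K_{f_0}\log(n/K_{f_0}) + K_{f_0}\|\b^0\|_\infty^2 \lesssim j^2 n\e_n^2$ for all large $j$. Under $\|\b^0\|_\infty^2\lesssim \log^{2\beta}n$ and $K_{f_0}\lesssim\sqrt{n}$ both hold as soon as $\beta>1/2$, because $n\e_n^2 = K_{f_0}\log^{2\beta}(n/K_{f_0})$ then strictly dominates the competing $K_{f_0}\log(n/K_{f_0})$ and $K_{f_0}\|\b^0\|_\infty^2$ contributions. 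The main obstacle I expect is this bookkeeping in \ref{GV2}: the partition-enumeration penalty $K_{f_0}\log(n/K_{f_0})$ in the denominator has no analogue in Theorem~\ref{thm:main_step}, and one must verify that the balance condition is strong enough that no additional logarithmic factor leaks through either the covering step or the $\|\b-\b_0^{\mathrm{ext}}\|_2$-to-$\|f-f_0\|_n$ conversion, which is what ultimately pins down both the sieve size and the exponent $2\beta-1$ in $k_n$.
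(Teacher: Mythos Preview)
Your proposal is correct and follows essentially the same route as the paper: the same sieve $\mathcal{F}_n=\bigcup_{K\le k_n}\mathcal{F}_K$ with $k_n\asymp K_{f_0}\log^{2\beta-1}(n/K_{f_0})$, the same partition-by-partition covering for \ref{GV1} using $\mathrm{card}(\bm{\Omega}^{BI}_K)\le\binom{n-1}{K-1}$, and the same small-ball lower bound obtained by restricting to a single balanced partition in $\bm{\Omega}^{BI}_{K_{f_0}}$ on which $f_0$ is exact. You have also correctly isolated the new combinatorial penalty $K_{f_0}\log(n/K_{f_0})$ coming from $\pi_\Omega$ as the reason the exponent must increase from $1$ to $2\beta>1$.
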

\begin{proof}
All three conditions \ref{GV1}, \ref{GV2} and  \ref{GV3} hold if we choose $k_n\propto K_{f_0}[{\log (n/K_{f_0}})]^{2\beta-1}$. 
The entropy condition will be satisfied when
$\log \left(\sum_{k=1}^{k_n} C^k\mathrm{card}(\bm{\Omega}_k^{BI})\right)\lesssim n\,\varepsilon_n^2$ for some $C>0$, {where $\e_n = n^{-1/2}\sqrt{K_{f_0}\log^{2\beta}(n/K_{f_0})}$}.  Using the upper bound $\mathrm{card}(\bm{\Omega}_k^{BI})<{n-1\choose k-1}<{n-1\choose k_n-1}$ (because $k_n<\frac{n-1}{2}$ for large enough $n$), the condition  \ref{GV1} is verified. Using the fact that $\mathrm{card}(\bm\Omega_{K_{f_0}})\lesssim K_{f_0}\log (n/K_{f_0})$, the condition \ref{GV2} will be satisfied when, for some $D>0$, we have
\begin{equation}
e^{-K_{f_0}\log{\e_n}+(c_K+1)\,K_{f_0}\log K_{f_0}+D\,K_{f_0}\log (n/{K_{f_0}})+ K_{f_0}\|\b^0\|_\infty^2 - c_K(k_n+1)\log(k_n+1) + 2n\e_n^2} \to 0. \label{C2eq}
\end{equation}
This holds for our choice of $k_n$ under the assumption $\|\b^0\|^2_\infty\lesssim \log^{2\beta} n$ and $K_{f_0}\lesssim \sqrt{n}$. These choices also yield \ref{GV3}. 
\end{proof}

\begin{remark}\label{remark}
When $K_{f_0}\gtrsim\sqrt n$, Theorem \ref{thm:main_step} and  Theorem \ref{thm:main_step_BI} still hold, only with the bit slower slower concentration rate $n^{-1/2}\sqrt{K_{f_0}\log n}$.
\end{remark}

\section{Discussion}
We provided the first posterior concentration rate results for Bayesian non-parametric regression with step functions. We showed that under suitable complexity priors,  the Bayesian procedure adapts to the unknown aspects of the  target step function. Our approach can be  extended in three ways: (a) to smooth $f_0$ functions, (b) to dimension reduction with {high-dimensional} predictors, (c) to more general partitioning schemes that correspond to  methods like Bayesian CART and BART.  These three extensions are developed in our  followup manuscript {\cite{Rockova2017}}.

\section{Acknowledgment}
This work was supported by the James S. Kemper Foundation Faculty Research Fund at the University of Chicago Booth School of Business.

\small

\bibliographystyle{unsrt}

\begin{thebibliography}{10}

\bibitem{cart}
L.~{Breiman}, J.~H. {Friedman}, R.~A. {Olshen}, and C.~J. {Stone}.
\newblock {\em Classification and Regression Trees}.
\newblock Statistics/Probability Series. Wadsworth Publishing Company, Belmont,
  California, U.S.A., 1984.

\bibitem{breiman}
L.~Breiman.
\newblock Random forests.
\newblock {\em Mach. Learn.}, 45:5--32, 2001.

\bibitem{Berchuck2005}
A.~Berchuck, E.~S. Iversen, J.~M. Lancaster, J.~Pittman, J.~Luo, P.~Lee,
  S.~Murphy, H.~K. Dressman, P.~G. Febbo, M.~West, J.~R. Nevins, and J.~R.
  Marks.
\newblock Patterns of gene expression that characterize long-term survival in
  advanced stage serous ovarian cancers.
\newblock {\em Clin. Cancer Res.}, 11(10):3686--3696, 2005.

\bibitem{Nimeh2007}
S.~Abu-Nimeh, D.~Nappa, X.~Wang, and S.~Nair.
\newblock A comparison of machine learning techniques for phishing detection.
\newblock In {\em Proceedings of the Anti-phishing Working Groups 2nd Annual
  eCrime Researchers Summit}, eCrime '07, pages 60--69, New York, NY, USA,
  2007. ACM.

\bibitem{Razi2005}
M.~A. Razi and K.~Athappilly.
\newblock A comparative predictive analysis of neural networks ({NN}s),
  nonlinear regression and classification and regression tree ({CART}) models.
\newblock {\em Expert Syst. Appl.}, 29(1):65 -- 74, 2005.

\bibitem{Green2012}
D.~P. Green and J.~L. Kern.
\newblock Modeling heterogeneous treatment effects in survey experiments with
  {B}ayesian {A}dditive {R}egression {T}rees.
\newblock {\em Public Opin. Q.}, 76(3):491, 2012.

\bibitem{Polley2010}
E.~C. Polly and M.~J. van~der Laan.
\newblock Super learner in prediction.
\newblock Available at: http://works.bepress.com/mark\_van\_der\_laan/200/,
  2010.

\bibitem{mondrian}
D.~M. Roy and Y.~W. Teh.
\newblock The {M}ondrian process.
\newblock In D.~Koller, D.~Schuurmans, Y.~Bengio, and L.~Bottou, editors, {\em
  Advances in Neural Information Processing Systems 21}, pages 1377--1384.
  Curran Associates, Inc., 2009.

\bibitem{Chipman1998}
H.~A. Chipman, E.~I. George, and R.~E. McCulloch.
\newblock Bayesian {CART} model search.
\newblock {\em JASA}, 93(443):935--948, 1998.

\bibitem{Denison1998}
D.~Denison, B.~Mallick, and A.~Smith.
\newblock A {B}ayesian {CART} algorithm.
\newblock {\em Biometrika}, 95(2):363--377, 1998.

\bibitem{Chipman2010}
H.~A. Chipman, E.~I. George, and R.~E. McCulloch.
\newblock {BART}: {B}ayesian {A}dditive {R}egression {T}rees.
\newblock {\em Ann. Appl. Stat.}, 4(1):266--298, 03 2010.

\bibitem{Ghosal2000}
S.~Ghosal, J.~K. Ghosh, and A.~W. van~der Vaart.
\newblock Convergence rates of posterior distributions.
\newblock {\em Ann. Statist.}, 28(2):500--531, 04 2000.

\bibitem{Zhang2004}
T.~Zhang.
\newblock Learning bounds for a generalized family of {B}ayesian posterior
  distributions.
\newblock In S.~Thrun, L.~K. Saul, and P.~B. Sch\"{o}lkopf, editors, {\em
  Advances in Neural Information Processing Systems 16}, pages 1149--1156. MIT
  Press, 2004.

\bibitem{Tang2014}
J.~Tang, Z.~Meng, X.~Nguyen, Q.~Mei, and M.~Zhang.
\newblock Understanding the limiting factors of topic modeling via posterior
  contraction analysis.
\newblock In T.~Jebara and E.~P. Xing, editors, {\em Proceedings of the 31st
  International Conference on Machine Learning (ICML-14)}, pages 190--198. JMLR
  Workshop and Conference Proceedings, 2014.

\bibitem{Korda2013}
N.~Korda, E.~Kaufmann, and R.~Munos.
\newblock Thompson sampling for 1-dimensional exponential family bandits.
\newblock In C.~J.~C. Burges, L.~Bottou, M.~Welling, Z.~Ghahramani, and K.~Q.
  Weinberger, editors, {\em Advances in Neural Information Processing Systems
  26}, pages 1448--1456. Curran Associates, Inc., 2013.

\bibitem{Briol2015}
F.-X. Briol, C.~Oates, M.~Girolami, and M.~A. Osborne.
\newblock Frank-{W}olfe {B}ayesian quadrature: Probabilistic integration with
  theoretical guarantees.
\newblock In C.~Cortes, N.~D. Lawrence, D.~D. Lee, M.~Sugiyama, and R.~Garnett,
  editors, {\em Advances in Neural Information Processing Systems 28}, pages
  1162--1170. Curran Associates, Inc., 2015.

\bibitem{Chen2016}
M.~Chen, C.~Gao, and H.~Zhao.
\newblock Posterior contraction rates of the phylogenetic indian buffet
  processes.
\newblock {\em Bayesian Anal.}, 11(2):477--497, 06 2016.

\bibitem{Ghosal2007}
S.~Ghosal and A.~van~der Vaart.
\newblock Convergence rates of posterior distributions for noniid observations.
\newblock {\em Ann. Statist.}, 35(1):192--223, 02 2007.

\bibitem{Szabo2015}
B.~Szab\'o, A.~W. van~der Vaart, and J.~H. van Zanten.
\newblock Frequentist coverage of adaptive nonparametric {B}ayesian credible
  sets.
\newblock {\em Ann. Statist.}, 43(4):1391--1428, 08 2015.

\bibitem{Castillo2014}
I.~Castillo and R.~Nickl.
\newblock On the {B}ernstein von {M}ises phenomenon for nonparametric {B}ayes
  procedures.
\newblock {\em Ann. Statist.}, 42(5):1941--1969, 2014.

\bibitem{Rousseau2016b}
J.~{Rousseau} and B.~{Szabo}.
\newblock {Asymptotic frequentist coverage properties of {B}ayesian credible
  sets for sieve priors in general settings}.
\newblock {\em ArXiv e-prints}, September 2016.

\bibitem{Castillo_polya}
I.~Castillo.
\newblock Polya tree posterior distributions on densities.
\newblock {\em preprint available at
  \url{http://www.lpma-paris.fr/pageperso/castillo/polya.pdf }}, 2016.

\bibitem{Liu2015}
L.~Liu and W.~H. Wong.
\newblock Multivariate density estimation via adaptive partitioning (ii):
  posterior concentration.
\newblock arXiv:1508.04812v1, 2015.

\bibitem{Scricciolo2007}
C.~Scricciolo.
\newblock On rates of convergence for {B}ayesian density estimation.
\newblock {\em Scand. J. Stat.}, 34(3):626--642, 2007.

\bibitem{coram}
M.~Coram and S.~Lalley.
\newblock Consistency of {B}ayes estimators of a binary regression function.
\newblock {\em Ann. Statist.}, 34(3):1233--1269, 2006.

\bibitem{shepp}
L.~Shepp.
\newblock Covering the circle with random arcs.
\newblock {\em Israel J. Math.}, 34(11):328--345, 1972.

\bibitem{Feller68}
W.~Feller.
\newblock {\em {An Introduction to Probability Theory and Its Applications,
  Vol. 2, 3rd Edition}}.
\newblock Wiley, 3rd edition, January 1968.

\bibitem{Donoho1997}
D.~L. Donoho.
\newblock {CART} and best-ortho-basis: a connection.
\newblock {\em Ann. Statist.}, 25(5):1870--1911, 10 1997.

\bibitem{Rockova2017}
V.~Rockova and S.~L. van~der Pas.
\newblock Posterior concentration for {B}ayesian regression trees and their
  ensembles.
\newblock arXiv:1708.08734, 2017.

\bibitem{anderson}
T.~Anderson.
\newblock Some nonparametric multivariate procedures based on statistically
  equivalent blocks.
\newblock In P.R. Krishnaiah, editor, {\em Multivariate Analysis}, pages 5--27.
  Academic Press, New York, 1966.

\bibitem{flatto_konheim}
L.~Flatto and A.~Konheim.
\newblock The random division of an interval and the random covering of a
  circle.
\newblock {\em {SIAM} Rev.}, 4:211--222, 1962.

\bibitem{nobel}
A.~Nobel.
\newblock Histogram regression estimation using data-dependent partitions.
\newblock {\em Ann. Statist.}, 24(3):1084--1105, 1996.

\bibitem{Gao}
C.~Gao, F.~Han, and C.H. Zhang.
\newblock Minimax risk bounds for piecewise constant models.
\newblock {\em Manuscript}, pages 1--36, 2017.

\end{thebibliography}

\end{document}